\numberwithin{equation}{section}
\begin{document}

\newtheorem{problem}{Problem}
\newtheorem{theorem}{Theorem}[section]
\newtheorem{lemma}[theorem]{Lemma}
\newtheorem{corollary}[theorem]{Corollary}
\newtheorem{example}[theorem]{Example}
\newtheorem{claim}[theorem]{Claim}
\newtheorem{cor}[theorem]{Corollary}
\newtheorem{prop}[theorem]{Proposition}
\newtheorem{definition}[theorem]{Definition}
\newtheorem{question}[theorem]{Question}
\newtheorem{conj}{Conjecture}
\newtheorem{hypothesis}{Hypothesis}
\def\cA{{\mathcal A}}
\def\cB{{\mathcal B}}
\def\cC{{\mathcal C}}
\def\cD{{\mathcal D}}
\def\cE{{\mathcal E}}
\def\cF{{\mathcal F}}
\def\cG{{\mathcal G}}
\def\cH{{\mathcal H}}
\def\cI{{\mathcal I}}
\def\cJ{{\mathcal J}}
\def\cK{{\mathcal K}}
\def\cL{{\mathcal L}}
\def\cM{{\mathcal M}}
\def\cN{{\mathcal N}}
\def\cO{{\mathcal O}}
\def\cP{{\mathcal P}}
\def\cQ{{\mathcal Q}}
\def\cR{{\mathcal R}}
\def\cS{{\mathcal S}}
\def\cT{{\mathcal T}}
\def\cU{{\mathcal U}}
\def\cV{{\mathcal V}}
\def\cW{{\mathcal W}}
\def\cX{{\mathcal X}}
\def\cY{{\mathcal Y}}
\def\cZ{{\mathcal Z}}

\def\A{{\mathbb A}}
\def\B{{\mathbb B}}
\def\C{{\mathbb C}}
\def\D{{\mathbb D}}
\def\E{{\mathbb E}}
\def\F{{\mathbb F}}
\def\G{{\mathbb G}}
\def\I{{\mathbb I}}
\def\J{{\mathbb J}}
\def\K{{\mathbb K}}
\def\L{{\mathbb L}}
\def\M{{\mathbb M}}
\def\N{{\mathbb N}}
\def\O{{\mathbb O}}
\def\P{{\mathbb P}}
\def\Q{{\mathbb Q}}
\def\R{{\mathbb R}}
\def\S{{\mathbb S}}
\def\T{{\mathbb T}}
\def\U{{\mathbb U}}
\def\V{{\mathbb V}}
\def\W{{\mathbb W}}
\def\X{{\mathbb X}}
\def\Y{{\mathbb Y}}
\def\Z{{\mathbb Z}}

\def\e{{\mathbf{e}}}
\def\ep{{\mathbf{e}}_p}
\def\eq{{\mathbf{e}}_q}

\def\scr{\scriptstyle}
\def\({\left(}
\def\){\right)}
\def\[{\left[}
\def\]{\right]}
\def\<{\langle}
\def\>{\rangle}
\def\fl#1{\left\lfloor#1\right\rfloor}
\def\rf#1{\left\lceil#1\right\rceil}
\def\le{\leqslant}
\def\ge{\geqslant}
\def\eps{\varepsilon}
\def\mand{\qquad\mbox{and}\qquad}

\def\sssum{\mathop{\sum\ \sum\ \sum}}
\def\ssum{\mathop{\sum\, \sum}}
\def\ssumw{\mathop{\sum\qquad \sum}}

\def\vec#1{\mathbf{#1}}
\def\inv#1{\overline{#1}}
\def\num#1{\mathrm{num}(#1)}
\def\dist{\mathrm{dist}}

\def\fA{{\mathfrak A}}
\def\fB{{\mathfrak B}}
\def\fC{{\mathfrak C}}
\def\fU{{\mathfrak U}}
\def\fV{{\mathfrak V}}

\newcommand{\bflambda}{{\boldsymbol{\lambda}}}
\newcommand{\bfxi}{{\boldsymbol{\xi}}}
\newcommand{\bfrho}{{\boldsymbol{\rho}}}
\newcommand{\bfnu}{{\boldsymbol{\nu}}}
\newcommand*\diff{\mathop{}\!\mathrm{d}}

\def\GL{\mathrm{GL}}
\def\SL{\mathrm{SL}}

\def\Hba{\overline{\cH}_{a,m}}
\def\Hta{\widetilde{\cH}_{a,m}}
\def\Hb1{\overline{\cH}_{m}}
\def\Ht1{\widetilde{\cH}_{m}}

\def\flp#1{{\left\langle#1\right\rangle}_p}
\def\flm#1{{\left\langle#1\right\rangle}_m}
\def\dmod#1#2{\left\|#1\right\|_{#2}}
\def\dmodq#1{\left\|#1\right\|_q}

\newcommand{\funccol}{\colon \thinspace}
\setlength\parindent{0pt}

\def\Zm{\Z/m\Z}

\def\Err{{\mathbf{E}}}

\newcommand{\comm}[1]{\marginpar{%
\vskip-\baselineskip 
\raggedright\footnotesize
\itshape\hrule\smallskip#1\par\smallskip\hrule}}

\def\xxx{\vskip5pt\hrule\vskip5pt}



\title[Solutions to systems of polynomial  congruences in balls]{Distribution of solutions to systems of congruences in balls}
\author{Michael Harm}
\address{School of Mathematics and Statistics, UNSW, 
Sydney, NSW 2052, Australia}
\email{m.harm@unsw.edu.au}

\keywords{systems of polynomial congruences, uniform distribution, discrepancy, exponential sums}
\subjclass[2020]{11D79, 11K38, 11T23}

\begin{abstract}
    Let $G_1,\dots, G_n\in \F_p[X_1,\dots,X_m]$ be $n$ polynomials in $m$ variables over the finite field $\F_p$ of $p$ elements. For any sufficiently large prime $p$ and non-trivial bounds for the Weyl sums associated to the non-trivial linear combinations of $G=(G_1,\dots, G_n)$, we study various properties regarding the distribution of the vectors of fractional parts
    \begin{equation*}
        \bigg(\bigg\{ \frac{G_1(\textbf{x})}{p}\bigg\},\cdots,\bigg\{ \frac{G_n(\textbf{x})}{p}\bigg\}\bigg)\in \T^n,\hspace{10pt} \textbf{x}\in \F_p^m.
    \end{equation*}
    We prove refinements of equidistribution, such as bounds for the ball discrepancy and variance. 
\end{abstract}
\maketitle
\thispagestyle{empty}
    
\section{Introduction}

\subsection{Equidistribution}
    Let $p$ be a prime and let $\F_p$ be the finite field of $p$ elements. We define a polynomial system $G=(G_1,\dots ,G_n)$ with $n\geq 2$ polynomials $G_j$ in $m$ variables over $\F_p$:
    \begin{equation*}
        G_j(X_1,...,X_m)\in \F_p[X_1,...,X_m],\thinspace j=1,...,n.
    \end{equation*}
    
    Dividing $G$ by $p$ and taking fractional parts then induces a map onto the Torus $\T^n:=(\R/\Z)^n$ and its image yields the multiset
    \begin{equation}\label{eq: multiset}
        A_G:=\bigg\{ \bigg( \frac{G_1(x)}{p},\dots, \frac{G_n(x)}{p}\bigg) \funccol x\in \F_p^m \bigg\}\subset \T^n.
    \end{equation}
    This set contains $\#A_G=p^m$ points (counted with multiplicities). 
    \begin{definition}\label{def: measure}
        Let $B\subset\T^n$ be a Borel set. We assign the probability measure $\mu_G$ to the multiset $A_G$ on $\T^n$ by
    \begin{equation}\label{eq: mu G}
        \mu_G(B) := \frac{\#(B\cap A_G)}{p^m}.
    \end{equation}
    
    Let $f\funccol \T^n \rightarrow \C$ be a measurable function. We denote the integral with respect to the measure $\mu_G$ by
    \begin{equation}\label{eq: integral mu G}
        \int_{\T^n} f(x) \diff \mu_G(x) := \frac{1}{p^m} \sum_{d\in A_G}f(d) .
    \end{equation}
    \end{definition}
    In \Cref{sec: fourier transform} we will use the Fourier transform to write \eqref{eq: integral mu G} in terms of the following Weyl sum.
    \begin{definition}\label{def: Weyl sum}
        Let $v\in\Z^n$. We denote the Weyl sum associated to $G$ by
        \begin{equation}\label{eq: Weyl sum}
            S_G(v)= \sum_{x\in \F_p^m} \ep\bigg(\sum_{j=1}^n v_j G_j(x)\bigg),
        \end{equation}
        where we denote $\ep(r)=e^{2\pi i r/p}$ for $r\in\R$.
    \end{definition}
    Our results will largely depend on having a good upper bound for \eqref{eq: Weyl sum} associated to a given polynomial system $G$. Noticing that $p^m$ is a trivial upper bound, we categorize $G$ as follows.
    \begin{definition}\label{def: eta}
        We say a polynomial system $G$ is of type $\eta\geq 0$, if 
        \begin{equation*}
            \frac{|S_G(v)|}{p^m}\ll\begin{cases}
                1 & \text{if }v\in p\Z^n,\\
                p^{-\eta} & \text{otherwise,}
            \end{cases}
        \end{equation*}
        where $\ll$ denotes the Vinogradov symbol, namely $f\ll g$ is equivalent to $|f|\leq C g$ for some positive, real $C$ and functions $f,g$, where $g$ attains non-negative, real values.
    \end{definition}
    \begin{example}
        We can adapt various existing bounds for Weyl sums, such that they hold for any non-trivial linear combination of $G$. 
        \begin{enumerate}
            \item For  arbitrary non-constant polynomials modulo $p$ we may use the classical Weil 
            bound, see, e.g.,~\cite[Chapter~6, Theorem~3]{li1996number} 
            to achieve $\eta=1/2$.
            \item Let $G_{v,d(v)}$ be the degree $d(v)$ homogeneous part of the degree $d(v)$ polynomial $v\cdot G$. If the locus $G_{v,d(v)}=0$ is a non-singular hypersurface in $\P^{m-1}$ for all non-trivial linear combinations $v\cdot G$, then we may use Deligne's result \cite[Theorem 8.4]{deligne1974conjecture} to achieve $\eta=m/2$.
            \item We refer e.g. to \cite{fouvry2001general} for a catalogue of varying bounds for exponential sums.
        \end{enumerate}
    \end{example}
    Note that while it is generally hard to preserve the non-singularity condition of the individual polynomials $G_j$ for all non-trivial linear combinations, we may simply require the polynomials to be of different degrees. The measures $\mu_G$ (Definition \ref{def: measure}) equidistribute for all polynomial systems $G$ of positive type $\eta>0$ as $p\rightarrow\infty$. Thus for every continuity set $B\subset\T^n$ we have
        \begin{equation}\label{eq: equidistribution measure}
            \lim_{p\rightarrow\infty}\mu_G (B)= \mu (B),
        \end{equation}
        and for every continuous function $f\funccol \T^n\rightarrow\C$ we have
        \begin{equation}\label{eq: equidistribution integral}
            \lim_{p\rightarrow\infty}\int_{\T^n}f(x) \diff \mu_G(x) = \int_{\T^n}f(x) \diff x.
        \end{equation}

    The proof of \eqref{eq: equidistribution integral} follows directly from Lemma \ref{lemma: Fourier transform} and \eqref{eq: equidistribution measure} then follows by inserting the indicator function of the set $B$ and the Portmanteau theorem. In this paper we are studying various refinements of this equidistribution result. We follow the methods of Humphries \cite{humphries_2021} who used the Weil bound for Kloosterman sums \cite[Corollary 11.12]{iwaniec2021analytic} to prove the uniform distribution of the points $G(x)=(x,x^{-1})\in \F_q^2$, for sufficiently large integers $q$ by bounding its ball discrepancy as $q\rightarrow \infty$. Notably, Humphries samples over $x\in\F_q^*$ from the multiplicative group modulo $q$ instead of the entire ring. Furthermore, the function $G(x)\equiv (x,x^{\varphi(q)-1}) \text{ mod }(q)$ can be written as a polynomial system via Euler's theorem (Here $\varphi$ denotes the Euler totient function). Kerr and Shparlinski \cite{kerr2012distribution} used a variation of Deligne's result \cite[Equation 10.6]{fouvry2001general} to prove the uniform distribution of the points $G(x)\in \F_p^n$, where $G$ is a degree 2 independent polynomial system and $x$ runs through a rather general subset of $\Gamma\subseteq \F_p^m$. We aim to generalize these results for any polynomial system $G$ with $n\geq 2$ polynomials whose associated Weyl sums (\Cref{def: Weyl sum}) can be bounded by $p^{m-\eta}$ (\Cref{def: eta}).
    
\subsection{Discrepancy}    
    We start by bounding the difference of the measures $\mu_G$ and the Lebesgue measure $\mu$ over any injective, geodesic $n$-ball in $T^n$.
    
    \begin{theorem}\label{thm: shrinking target}
        Let $G$ be a polynomial system of type $\eta\geq 0$ (Definition \ref{def: eta}). Furthermore let $B_R(y)$ be a Euclidean $n$-ball with radius $0< R<1/2$ and center point $y\in \T^n$ (if given). We have
        \begin{equation*}
        \begin{split}
            |\mu_G(B_R(y))-\mu(B_R)| \ll &\begin{cases}
                R^{\frac{n(n-1)}{n+1}}p^{-\frac{2\min (\eta,n)}{n+1}} &\text{for } R>p^{-\frac{\min (\eta,n)}{n}},\\
                p^{-\min(\eta,n)} &\text{for } R\leq p^{-\frac{\min (\eta,n)}{n}},
            \end{cases} 
        \end{split}
        \end{equation*}
        where $\mu$ denotes the Lebesgue measure.
    \end{theorem}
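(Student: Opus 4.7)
The plan is to approximate $\mathbf{1}_{B_R(y)}$ by smooth functions and apply Fourier analysis on $\T^n$ together with the Weyl bound of \Cref{def: eta}. For a parameter $0<\delta\le R$ to be optimized, I fix a non-negative bump $\chi_\delta\in C^\infty(\T^n)$ supported in a ball of radius $\delta$ with $\int\chi_\delta\diff\mu=1$, and set $\phi_\pm=\mathbf{1}_{B_{R\pm\delta}(y)}\ast\chi_\delta$. These satisfy $\phi_-\le\mathbf{1}_{B_R(y)}\le\phi_+$ and $\int(\phi_+-\phi_-)\diff\mu\ll R^{n-1}\delta$ (the volume of a spherical shell of thickness $O(\delta)$), so up to a smoothing error of this size the problem reduces to estimating $\int_{\T^n}\phi_\pm\,\diff(\mu_G-\mu)$.

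Expanding $\phi_\pm$ in Fourier series and using \eqref{eq: integral mu G} together with \eqref{eq: Weyl sum}, this integral becomes
\[
\int_{\T^n}\phi_\pm\,\diff(\mu_G-\mu)\;=\;\sum_{v\in\Z^n\setminus\{0\}}\hat\phi_\pm(v)\,\frac{S_G(v)}{p^m},
\]
which I split into $v\notin p\Z^n$ (where $|S_G(v)|/p^m\ll p^{-\eta}$ by hypothesis) and the aliased contribution $v=pw$ with $w\ne 0$ (where $S_G(v)=p^m$ trivially). Since $\hat\phi_\pm(v)=\widehat{\mathbf{1}_{B_{R\pm\delta}}}(v)\hat\chi_\delta(v)$, combining the Bessel-type stationary-phase bound
\[
\bigl|\widehat{\mathbf{1}_{B_R}}(v)\bigr|\ll\min\bigl(R^n,\,R^{(n-1)/2}|v|^{-(n+1)/2}\bigr)
\]
with the rapid decay $|\hat\chi_\delta(v)|\ll(1+|v|\delta)^{-N}$ (for any $N\ge 0$), a dyadic decomposition over $|v|$ with breakpoints at $1/R$ and $1/\delta$ produces
\[
\sum_{v\neq 0}|\hat\phi_\pm(v)|\ll R^{(n-1)/2}\delta^{-(n-1)/2}.
\]
The analogous dyadic analysis restricted to the sparser aliased lattice $p\Z^n\setminus\{0\}$ picks up an extra factor of $p^{-n}$ (from the counting density $(M/p)^n$ of lattice points at scale $M$ combined with the Bessel decay at $|v|\ge p$), yielding
\[
\sum_{w\neq 0}|\hat\phi_\pm(pw)|\ll p^{-n}R^{(n-1)/2}\delta^{-(n-1)/2}.
\]

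Assembling everything gives
\[
|\mu_G(B_R(y))-\mu(B_R)|\ll R^{n-1}\delta+\bigl(p^{-\eta}+p^{-n}\bigr)R^{(n-1)/2}\delta^{-(n-1)/2}\ll R^{n-1}\delta+p^{-\min(\eta,n)}R^{(n-1)/2}\delta^{-(n-1)/2},
\]
which is the source of the $\min(\eta,n)$ in the statement: once $\eta>n$, the aliased contribution rather than the Weyl bound becomes the limiting factor. Balancing the two terms yields $\delta_0=p^{-2\min(\eta,n)/(n+1)}R^{-(n-1)/(n+1)}$ and the main estimate $R^{n(n-1)/(n+1)}p^{-2\min(\eta,n)/(n+1)}$, valid precisely when $\delta_0\le R$, equivalently $R\ge p^{-\min(\eta,n)/n}$. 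For smaller $R$, taking $\delta=R$ produces $R^n+p^{-\min(\eta,n)}\ll p^{-\min(\eta,n)}$. I expect the main obstacle to be the sharp Fourier tail estimate: preserving the stationary-phase decay $R^{(n-1)/2}|v|^{-(n+1)/2}$ across all three dyadic regimes ($|v|\le 1/R$, $1/R<|v|\le 1/\delta$, $|v|>1/\delta$) is essential, since the trivial bound $R^n$ would degrade the exponent from $n(n-1)/(n+1)$ to $n^2/(n+1)$; carrying out the analogous dyadic computation on $p\Z^n$ cleanly enough to extract the $p^{-n}$ cap — and hence the correct $\min(\eta,n)$ transition — is the second delicate point.
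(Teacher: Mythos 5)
Your proposal is correct and essentially reproduces the paper's argument: both sandwich $\mathbf{1}_{B_R}$ between two mollified indicators (you use a generic smooth bump $\chi_\delta$, the paper the normalized indicator of $B_\rho$ convolved with $\mathbf{1}_{B_{R\pm\rho}}$, which gives an explicit product of two $J_{n/2}$-Bessel functions), both exploit the decay $|\widehat{\mathbf{1}_{B_R}}(v)|\ll\min(R^n,R^{(n-1)/2}\|v\|^{-(n+1)/2})$ across the same three dyadic regimes $\|v\|\le R^{-1}$, $R^{-1}<\|v\|\le\delta^{-1}$, $\|v\|>\delta^{-1}$, both peel off the aliased lattice $p\Z^n$ to produce the $p^{-n}$ cap and hence $\min(\eta,n)$, and both optimize the mollification width in the same way, including the cutoff $R\lessgtr p^{-\min(\eta,n)/n}$ forced by $\delta\le R$. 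The only organizational difference is that the paper folds the $v\notin p\Z^n$ and $v\in p\Z^n$ contributions into a single uniform estimate (\Cref{lemma: Zeta Weyl sum}) applied to each regime, while you carry out the aliased sum as a separate dyadic computation; the content is identical.
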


    Taking the classical Weil-bound $\eta=\frac{1}{2}$, this result may also be computed by the methods of Harman \cite{Har98}. Our next refinement is bounding the ball discrepancy of the measures $\mu_G$ as $p\rightarrow\infty$.
    
    \begin{definition}\label{def: ball discrepancy}
        Assume the same setting as in Theorem \ref{thm: shrinking target}. We define the ball discrepancy of the measures $\mu_G$ by
        \begin{equation}\label{eq: ball discrepancy}
            D(\mu_G):= \sup_{\substack{y\in\T^n\\0<R<\frac{1}{2}}}|\mu_G(B_R(y))-\mu(B_R)|.
        \end{equation}
    \end{definition}
    Note that the ball discrepancy differs from the box discrepancy $D^\text{box}(\mu_G)$, where we take the supremum over all boxes $[a_1,b_1]\times \cdots \times [a_n,b_n]$. In fact, one can show $D(\mu_G)\ll_n D^\text{box}(\mu_G)^{1/n}$ via \cite[Chapter 2, Theorem 1.6]{kuipers1974uniform}.
     
    \begin{corollary}\label{theorem: Discrepancy}
        As $p \rightarrow \infty$, the ball discrepancy (\Cref{def: ball discrepancy}) of the measures $\mu_G$ satisfies
            \begin{equation}\label{eq:ball discrepancy mu}
                D(\mu_G)\ll p^{-2\min (\eta,n)/(n+1)}.
            \end{equation}        
    \end{corollary}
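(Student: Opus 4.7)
The plan is to deduce this corollary directly from Theorem \ref{thm: shrinking target} by taking the supremum of its two case bounds over all admissible radii $R\in(0,1/2)$ and centers $y\in\T^n$.

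First I would handle the large-radius regime $R>p^{-\min(\eta,n)/n}$. The theorem gives
\[
|\mu_G(B_R(y))-\mu(B_R)|\ll R^{n(n-1)/(n+1)}p^{-2\min(\eta,n)/(n+1)}.
\]
Since we are assuming $n\ge 2$, the exponent $n(n-1)/(n+1)$ is strictly positive; together with $R<1/2<1$ this yields $R^{n(n-1)/(n+1)}<1$, and hence this contribution is $\ll p^{-2\min(\eta,n)/(n+1)}$ uniformly in $y$ and $R$.

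Next I would treat the small-radius regime $R\le p^{-\min(\eta,n)/n}$, where the theorem supplies the uniform estimate $|\mu_G(B_R(y))-\mu(B_R)|\ll p^{-\min(\eta,n)}$. Since $n\ge 2$ implies $n+1\ge 2$, we have $\min(\eta,n)\ge 2\min(\eta,n)/(n+1)$, so $p^{-\min(\eta,n)}\le p^{-2\min(\eta,n)/(n+1)}$, and this regime is dominated by the same quantity.

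Combining the two regimes and taking the supremum over $y\in\T^n$ and $R\in(0,1/2)$ in the definition of $D(\mu_G)$ yields the claim. There is essentially no obstacle: the work has already been done in Theorem \ref{thm: shrinking target}, and the corollary is just the observation that the $y$- and $R$-dependence in its bounds is tame enough that the supremum is attained (up to constants) by the larger of the two case bounds, which is $p^{-2\min(\eta,n)/(n+1)}$.
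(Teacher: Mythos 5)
Your proof is correct and is exactly the intended deduction: the paper states the corollary without a separate proof, treating it as an immediate consequence of Theorem \ref{thm: shrinking target}, and your argument—bounding each regime by $p^{-2\min(\eta,n)/(n+1)}$ using $R<1$ and $\min(\eta,n)\ge 2\min(\eta,n)/(n+1)$ for $n\ge 2$, then taking the supremum—is the standard way to make that implicit step explicit.
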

    These are generalizations of \cite[Theorems 1.1(1) \& 1.2(1)]{humphries_2021}, as inserting $n=2$ and the Weil bound for Kloosterman sums $\eta=1/2$ yields the same results. Theorem \ref{thm: shrinking target} is notably the dual to the paper by Kerr and Shparlinski \cite{kerr2012distribution}, who restrict the domain of the measures $\mu_G$ to rather general subsets of $\T^m$. In contrast, in this paper we restrict the codomain $\T^n$ to Euclidean balls.
    
\subsection{Variance}
    Our other refinement concerns the variance of the injective, geodesic $n$-balls of radius $R$ as $p\rightarrow\infty$.
    \begin{definition}\label{def: variance}
        Let $G$ be a polynomial system with measure $\mu_G$ (Definition \ref{def: measure}). The \mbox{variance} of $\mu_G$ with respect to a Borel set $B\subset \T^n$ is given by
        \begin{equation*}
            \text{Var}(\mu_G, B) := \int_{\T^n} (\mu_G(B(y))-\mu(B))^2 \thinspace \diff y,
        \end{equation*}
        where $B(y)\subset \T^n$ denotes the translation of the set $B$ by a vector $y\in\T^n$.
    \end{definition}

    \begin{theorem}\label{thm: variance}
        Let $G$ be a polynomial system of type $\eta$ (Definition \ref{def: eta}) and let $B_R(y)\subset \T^n$ denote the $n$-ball of radius $R<1/2$ with center $y\in\T^n$. The variance satisfies
        \begin{equation*}
            \text{Var}(\mu_G,B_R) \ll \frac{R^n}{p^{\min(2\eta,n)}}.
        \end{equation*}
    \end{theorem}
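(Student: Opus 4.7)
\medskip

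\textbf{Proof plan for Theorem \ref{thm: variance}.} The plan is to expand the centered counting function $\mu_G(B_R(y))-\mu(B_R)$ in Fourier series on $\T^n$ (in the variable $y$), apply Parseval to compute the integral, and then exploit the type $\eta$ hypothesis together with the standard Bessel-type decay of $\widehat{\mathbf{1}_{B_R}}$.

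First, I expand $\mathbf{1}_{B_R}$ on $\T^n$ (which is legitimate since $R<1/2$ so the ball does not wrap) and use the definition of $\mu_G$ to write
\begin{equation*}
   \mu_G(B_R(y))-\mu(B_R)=\sum_{k\in\Z^n\setminus\{0\}} \widehat{\mathbf{1}_{B_R}}(k)\, e(-k\cdot y)\, \frac{S_G(k)}{p^m},
\end{equation*}
where the $k=0$ term cancels the Lebesgue measure contribution because $\widehat{\mathbf{1}_{B_R}}(0)=\mu(B_R)$ and $S_G(0)=p^m$. By Parseval in $y$,
\begin{equation*}
   \mathrm{Var}(\mu_G,B_R)=\sum_{k\in\Z^n\setminus\{0\}} |\widehat{\mathbf{1}_{B_R}}(k)|^2\,\Bigl|\frac{S_G(k)}{p^m}\Bigr|^2.
\end{equation*}

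Next I split the frequency sum according to Definition \ref{def: eta}. For $k\notin p\Z^n$ I apply $|S_G(k)/p^m|\ll p^{-\eta}$ and discard the constraint, so by Parseval applied to $\mathbf{1}_{B_R}$ itself,
\begin{equation*}
   \sum_{k\notin p\Z^n} |\widehat{\mathbf{1}_{B_R}}(k)|^2\, p^{-2\eta}\le p^{-2\eta}\sum_{k\in\Z^n}|\widehat{\mathbf{1}_{B_R}}(k)|^2=p^{-2\eta}\,\mu(B_R)\ll R^n p^{-2\eta}.
\end{equation*}
For the complementary set $k=pk'$ with $k'\neq 0$, the type $\eta$ hypothesis only gives the trivial estimate $|S_G(k)/p^m|\ll 1$, so the task reduces to bounding
\begin{equation*}
   \Sigma:=\sum_{k'\in\Z^n\setminus\{0\}} |\widehat{\mathbf{1}_{B_R}}(pk')|^2.
\end{equation*}
Here I will use the two standard estimates $|\widehat{\mathbf{1}_{B_R}}(\xi)|\ll R^n$ and, via the Bessel asymptotics of the Fourier transform of the ball indicator on $\R^n$, $|\widehat{\mathbf{1}_{B_R}}(\xi)|\ll R^{(n-1)/2}|\xi|^{-(n+1)/2}$.

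The main obstacle is obtaining the bound $\Sigma\ll R^n/p^n$; this requires carefully choosing which of the two ball-transform estimates to use and correctly summing over the sublattice $p\Z^n$. I split into two regimes. If $pR\ge 1$, then $pR|k'|\ge 1$ for every $k'\neq 0$, so the Bessel bound dominates and
\begin{equation*}
   \Sigma\ll R^{n-1}p^{-(n+1)}\sum_{k'\neq 0}|k'|^{-(n+1)}\ll R^{n-1}p^{-(n+1)}\ll R^n p^{-n},
\end{equation*}
using $(pR)^{-1}\le 1$ in the last step. If $pR<1$, I split $\Sigma$ at the threshold $|k'|\asymp 1/(pR)$: the low-frequency part is bounded by $R^{2n}$ per term and contains $\ll (pR)^{-n}$ terms, giving $\ll R^n/p^n$; the high-frequency part is estimated, via the Bessel bound and $\sum_{|k'|>M}|k'|^{-(n+1)}\ll M^{-1}$, by $R^{n-1}p^{-(n+1)}\cdot (pR)\ll R^n/p^n$. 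Combining the two regimes yields $\Sigma\ll R^n/p^n$, hence the $p\Z^n$-contribution to the variance is $\ll R^n/p^n$.

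Adding the two pieces gives $\mathrm{Var}(\mu_G,B_R)\ll R^n p^{-2\eta}+R^n p^{-n}\ll R^n p^{-\min(2\eta,n)}$, as claimed.
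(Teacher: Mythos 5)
Your proof is correct, and it diverges from the paper's in a genuine way after the common starting point. Both arguments apply Parseval in $y$ to reduce the variance to $\sum_{v\neq 0}|\widehat{K_R}(v,\cdot)|^2|S_G(v)/p^m|^2$. The paper then inserts the Bessel form of $\widehat{K_R}$ from \Cref{lemma: Fourier transform indicator}, splits the frequency sum at $\|v\|\asymp R^{-1}$ according to the two regimes of \Cref{lemma: Bessel bound}, and feeds each piece into \Cref{lemma: Zeta Weyl sum}, which internally performs the split between $v\in p\Z^n$ and $v\notin p\Z^n$. You instead make the $p\Z^n$ dichotomy the primary decomposition, in keeping with \Cref{def: eta}. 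For $v\notin p\Z^n$ you avoid the Bessel analysis entirely by using Parseval a second time, $\sum_k|\widehat{\mathbf{1}_{B_R}}(k)|^2=\mu(B_R)\ll R^n$; this is a neat shortcut that the paper does not exploit. For the sublattice $p\Z^n$ you are forced to redo by hand the lattice-point counting that the paper packages into \Cref{lemma: Zeta value,lemma: Zeta Weyl sum}, splitting at $\|k'\|\asymp(pR)^{-1}$ and using both ball-transform estimates; this works but is the more laborious half of your argument. Net effect: your route is more elementary and more transparent about where the type-$\eta$ hypothesis enters, while the paper's route is more uniform across its theorems because the same pair of lemmas (\Cref{lemma: Bessel bound} and \Cref{lemma: Zeta Weyl sum}) services both the discrepancy and the variance proofs. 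Both yield $R^n(p^{-2\eta}+p^{-n})\ll R^n p^{-\min(2\eta,n)}$.
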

\subsection{Proof strategy}
    We express averages with respect to the probability measure $\mu_G$ using Fourier expansions of the indicator functions of Euclidean balls and the Weyl sums associated to a polynomial system $G$. For the proof of Theorem \ref{thm: shrinking target} we introduce smooth majorants and minorants of ball indicators, in order to ensure absolute convergence of the Fourier series. This is not necessary for the proof of Theorem \ref{thm: variance}, as the Fourier series for the variance case is already absolutely convergent. For both Theorems \ref{thm: shrinking target} \& \ref{thm: variance} we then split the frequency domain such that Weyl sum bounds of type $\eta$ can be applied effectively to control the error terms.
\section{Preliminaries}

\subsection{Fourier transform}\label{sec: fourier transform}
    In this section we define the Fourier transform on the Torus $\T^n$ and use it to obtain an identity that expresses \eqref{eq: integral mu G} in terms of the Weyl sums $S_G(v)$ (see \eqref{eq: Weyl sum}).
    \begin{definition}\label{def: Fourier transform}
        Let $f\in L^1(\T^n)$ be an integrable function. We define the Fourier transform of $f$ by
        \begin{equation}\label{eq: fourier transform}
            \widehat{f}(v) := \int_{\T^n} f(x) \textbf{e}(-v\cdot x)\diff x, \text{ for all }v\in \Z^n,
        \end{equation}
        where we denote $\e(r)=\e_1(r)$ for all $r\in\R$, and by $v\cdot w=\sum_{i=1}^n v_i w_i$ the inner product for $v,w\in \R^n$.  The corresponding Fourier series (see \cite[Chapter 3]{loukas2014classical}) is given by 
        \begin{equation}\label{eq: fourier series}
            f(x)= \sum_{v\in\Z^n} \widehat{f}(v) \textbf{e}(v\cdot x), \text{ for all }x\in \T^n.
        \end{equation}
    \end{definition}
    
    \begin{lemma}\label{lemma: Fourier transform}
        Let $f\in L^1(\T^n)$ be an integrable function and let $\widehat{f}$ be its Fourier transform \eqref{eq: fourier transform}. We have the identity for the integral associated to the measure $\mu_G$ (\Cref{def: measure})
        \begin{equation*}
            \int_{\T^n} f(x) \diff \mu_G(x) =\frac{1}{p^m}\sum_{v\in\Z^n} \widehat{f}(v) S_G(v),
        \end{equation*}
        where $S_G(v)$ is the Weyl sum \eqref{eq: Weyl sum}.
    \end{lemma}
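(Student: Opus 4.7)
The plan is a direct Fourier inversion argument. Starting from \eqref{eq: integral mu G}, I would write
\begin{equation*}
    \int_{\T^n} f(x)\diff\mu_G(x) = \frac{1}{p^m}\sum_{x\in\F_p^m} f\!\left(\frac{G_1(x)}{p},\dots,\frac{G_n(x)}{p}\right),
\end{equation*}
then substitute the Fourier series \eqref{eq: fourier series} pointwise at each $G(x)/p\in\T^n$ to obtain
\begin{equation*}
    f\!\left(\frac{G(x)}{p}\right) = \sum_{v\in\Z^n}\widehat{f}(v)\,\textbf{e}\!\left(\frac{v\cdot G(x)}{p}\right).
\end{equation*}
The key computational observation is that the character evaluates as $\textbf{e}(v\cdot G(x)/p)=\ep\!\left(\sum_{j=1}^n v_j G_j(x)\right)$, matching exactly the summand of the Weyl sum $S_G(v)$ in \eqref{eq: Weyl sum}.

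Next I would interchange the two summations, giving
\begin{equation*}
    \frac{1}{p^m}\sum_{x\in\F_p^m}\sum_{v\in\Z^n}\widehat{f}(v)\,\ep\!\Bigl(\textstyle\sum_{j=1}^n v_j G_j(x)\Bigr)
    = \frac{1}{p^m}\sum_{v\in\Z^n}\widehat{f}(v)\,S_G(v),
\end{equation*}
where the outer sum over $x\in\F_p^m$ is finite and pulls inside the sum over $v$, reconstructing $S_G(v)$ in each term.

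The main obstacle is justifying the use of the Fourier inversion formula \eqref{eq: fourier series} pointwise at the $p^m$ rational points $G(x)/p$, which is not automatic for a generic $f\in L^1(\T^n)$. The cleanest resolution is to first verify the identity for trigonometric polynomials, where both series collapse to finite sums and no convergence issue arises, and then extend to the regularity class actually needed in the sequel (for instance, functions with $\widehat{f}\in\ell^1(\Z^n)$, where the Fourier series converges absolutely and uniformly, permitting both pointwise evaluation and the interchange of summations by Fubini/Tonelli). In the subsequent applications of this lemma (to bound ball discrepancies and variances) one works with smooth majorants and minorants of indicator functions of balls whose Fourier coefficients decay rapidly enough, so this mild regularity is harmless, and I would state the lemma accordingly or invoke a standard density approximation at the end.
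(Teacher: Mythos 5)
Your argument follows the same route as the paper's: expand $f$ in its Fourier series \eqref{eq: fourier series}, insert this into the $\mu_G$-integral, and interchange the two sums to reconstitute the Weyl sums $S_G(v)$. The computation is identical; the paper simply cites Fubini--Tonelli for the interchange. What you add, correctly, is the observation that the hypothesis $f\in L^1(\T^n)$ is too weak to justify this: $\mu_G$ is an atomic measure supported on finitely many rational points, so $\int f\diff\mu_G$ is a finite sum that requires evaluating $f$ at specific points and requires the Fourier series to converge pointwise there, neither of which is automatic (or even well-defined modulo null sets) for a general $f\in L^1$. Your proposed repair — establish the identity for trigonometric polynomials, or more generally for $f$ with $\widehat f\in\ell^1(\Z^n)$ so that the Fourier series converges absolutely and uniformly and Fubini applies cleanly — is exactly what is needed, and it covers every use of the lemma in the paper: equation \eqref{eq: equidistribution integral} only needs continuous $f$, and the discrepancy and variance proofs apply the lemma only to the smoothed indicators $K^\pm_{R,\rho}$, whose Fourier coefficients are shown to be absolutely summable. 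So your proof is essentially the paper's, carried out with a more honest hypothesis.
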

    \begin{proof}
        We insert \eqref{eq: fourier series} and then exchange the order of integration and summation to obtain
    \begin{equation*}
    \begin{split}
        \int_{\T^n} f(x) \diff \mu_G(x)&=\int_{\T^n} \sum_{v\in\Z^n} \widehat{f}(v) \textbf{e}(v\cdot x) \text{ d}\mu_G(x)\\
        &=\sum_{v\in\Z^n} \widehat{f}(v)\int_{\T^n}  \textbf{e}(v\cdot x) \text{ d}\mu_G(x)\\
        &=\frac{1}{p^m}\sum_{v\in\Z^n} \widehat{f}(v) S_G(v).
    \end{split}
    \end{equation*}
    \end{proof}

\subsection{A Bessel function of the first order}
    In this section we give a brief overview over a Bessel function of the first kind. For further reading we reference to \cite{korenevbessel}. For the purposes of this paper, it suffices to define a Bessel function of the first kind by its infinite series (see e.g. \cite[Page 9, Equation (1.7)]{korenevbessel}).
    \begin{definition}
        We define the Bessel function of order $\nu\in\R$ of the first kind by
        \begin{equation}\label{eq: bessel}
            J_\nu(x):=\bigg(\frac{x}{2}\bigg)^\nu \sum_{k=0}^\infty\frac{(-1)^{k}}{k!\thinspace\Gamma(\nu+k+1)}\bigg(\frac{x}{2}\bigg)^{2k},
        \end{equation}
        for all $x\in\R$, where $\Gamma(\cdot)$ denotes the usual Gamma function (see e.g. \cite[Chapter 7.2]{makarovanalysis}).
    \end{definition}

    \begin{lemma}\label{lemma: Bessel integral}
    Let $\nu>-1$. We have
        \begin{equation*}
            \int_0^1 x^{\nu+1} J_\nu(ax) \thinspace \diff x = a^{-1} J_{\nu+1}(a),
        \end{equation*}
        for all $a\in \R$.
    \end{lemma}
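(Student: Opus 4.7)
The plan is to prove the identity by direct manipulation of the power series defining $J_\nu$, which is the only tool about Bessel functions that has been introduced so far in the paper. Since the series \eqref{eq: bessel} has infinite radius of convergence, on the compact interval $[0,1]$ the series for $x^{\nu+1}J_\nu(ax)$ converges absolutely and uniformly, so term-by-term integration is legitimate (by dominated convergence or a standard uniform convergence argument).

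First I would substitute \eqref{eq: bessel} with argument $ax$ into the integrand and pull the prefactor out:
\begin{equation*}
x^{\nu+1}J_\nu(ax)=\left(\frac{a}{2}\right)^{\!\nu}\sum_{k=0}^\infty \frac{(-1)^k}{k!\,\Gamma(\nu+k+1)}\left(\frac{a}{2}\right)^{\!2k}x^{2\nu+2k+1}.
\end{equation*}
Integrating term by term against $dx$ on $[0,1]$ produces the factor $1/(2\nu+2k+2)=1/(2(\nu+k+1))$, and using the functional equation $\Gamma(\nu+k+2)=(\nu+k+1)\Gamma(\nu+k+1)$ the result collapses to
\begin{equation*}
\int_0^1 x^{\nu+1}J_\nu(ax)\,\diff x = \frac{1}{2}\left(\frac{a}{2}\right)^{\!\nu}\sum_{k=0}^\infty \frac{(-1)^k}{k!\,\Gamma(\nu+k+2)}\left(\frac{a}{2}\right)^{\!2k}.
\end{equation*}

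Next I would compare this with the series for $J_{\nu+1}(a)$ obtained from \eqref{eq: bessel} with index $\nu+1$: it equals $(a/2)^{\nu+1}$ times the same infinite sum as above. Dividing by $a$ yields $\tfrac{1}{2}(a/2)^\nu$ times that sum, which matches what we just computed. This gives the claimed identity $\int_0^1 x^{\nu+1} J_\nu(ax)\diff x = a^{-1}J_{\nu+1}(a)$. The condition $\nu>-1$ is needed precisely so that the integral $\int_0^1 x^{2\nu+2k+1}\diff x$ converges for all $k\geq 0$ (in particular for $k=0$), so the integration-by-series step is valid.

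The only subtle point, and the main thing worth verifying carefully rather than "the main obstacle" (there isn't really one), is the justification for exchanging sum and integral; this is immediate on $[0,1]$ from uniform convergence of the Bessel series on any bounded set. Everything else is bookkeeping with the $\Gamma$ recursion.
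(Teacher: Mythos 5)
Your argument is correct, but it takes a different route from the paper. The paper proves the lemma by citing Sonine's first finite integral
\[
a^{-1}J_{\nu+1}(a)=\int_0^{\pi/2}J_\nu(a\sin\theta)\sin^{\nu+1}\theta\cos\theta\,\diff\theta
\]
and making the substitution $\sin\theta\mapsto x$; yours works directly from the power series \eqref{eq: bessel} with the $\Gamma$ recursion, which is exactly the ``series shift'' argument. Both are valid, and yours has the advantage of being self-contained given what the paper has already introduced (only the defining series), whereas the paper's is shorter on the page but leans on a black-box reference. One small imprecision in your write-up: for $-1<\nu<-1/2$ the integrand $x^{\nu+1}J_\nu(ax)\sim c\,x^{2\nu+1}$ is unbounded near $x=0$, so a plain ``uniform convergence on $[0,1]$'' justification for swapping sum and integral does not quite apply to the product $x^{2\nu+1}\sum_k(\cdots)x^{2k}$; the clean justification is the one you also offer, namely dominated convergence, using that the entire-function part $\sum_k\frac{(-1)^k}{k!\,\Gamma(\nu+k+1)}(a/2)^{2k}x^{2k}$ has partial sums uniformly bounded on $[0,1]$ while $x^{2\nu+1}$ is integrable precisely when $\nu>-1$.
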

    \begin{proof}
        From Sonine's first finite integral \cite[Page 65, Equation (20.1)]{korenevbessel} we obtain
        \begin{equation*}
            a^{-1}J_{\nu+1}(a) = \int_{0}^{\pi/2} J_\nu(a \sin\theta) \sin^{\nu+1}\theta \cos \theta \diff \theta.
        \end{equation*}
        Making a change of variables $\sin \theta \rightarrow x$ then gives us the desired result.
    \end{proof}
    \begin{lemma}\label{lemma: Bessel bound}
        Let $x\geq 0$. We have the following bound for the Bessel function of order $\nu$ of the first kind
        \begin{equation*}
            J_\nu(x)\ll \min\bigg\{x^\nu , \frac{1}{\sqrt{x}}\bigg\}.
        \end{equation*}
    \end{lemma}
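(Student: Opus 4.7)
My plan is to split the range at $x=1$ and handle the two regimes with different tools. The small-$x$ bound $J_\nu(x)\ll x^\nu$ would come directly from the defining power series \eqref{eq: bessel}, while the large-$x$ bound $J_\nu(x)\ll x^{-1/2}$ would come from the classical large-argument asymptotic of $J_\nu$. Combining the two then yields the min bound after checking which of the two quantities dominates in each regime.

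For $0\le x\le 1$, I would pull the prefactor $(x/2)^\nu$ out of \eqref{eq: bessel} and note that the remaining series
$$
\sum_{k\ge 0}\frac{(-1)^k}{k!\,\Gamma(\nu+k+1)}\left(\frac{x}{2}\right)^{2k}
$$
is uniformly bounded on $[0,1]$ by the positive convergent constant obtained by replacing each term by its absolute value and setting $x=1$, which depends only on $\nu$. This yields $J_\nu(x)\ll x^\nu$ on $[0,1]$. Since in this range $x^\nu\le x^{-1/2}$ for the values $\nu\ge -1/2$ relevant in the paper (the parameter will be of the form $\nu=n/2-1$ with $n\ge 2$), the bound $\ll\min\{x^\nu,x^{-1/2}\}$ reduces to the one already established.

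For $x\ge 1$, I would cite the classical large-argument asymptotic
$$
J_\nu(x)=\sqrt{\frac{2}{\pi x}}\,\cos\!\left(x-\frac{\nu\pi}{2}-\frac{\pi}{4}\right)+O\!\left(x^{-3/2}\right),
$$
available in \cite{korenevbessel}. Bounding the cosine by $1$ and absorbing the error term for $x\ge 1$ gives $J_\nu(x)\ll x^{-1/2}$. On $[1,\infty)$ we have $x^{-1/2}\le x^\nu$ in the same range of $\nu$, so again the minimum matches the bound produced.

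The main obstacle is genuinely the large-$x$ asymptotic, whose self-contained derivation proceeds via Poisson's integral representation of $J_\nu$ combined with the method of stationary phase, and is somewhat involved. In the present setting it suffices to quote this result from Korenev's monograph rather than re-derive it, after which the proof of Lemma~\ref{lemma: Bessel bound} reduces to the elementary bookkeeping described above to glue the two regimes together.
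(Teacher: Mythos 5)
Your proof follows essentially the same approach as the paper: the small-$x$ bound $J_\nu(x)\ll x^\nu$ is read off from the defining power series \eqref{eq: bessel}, and the large-$x$ bound $J_\nu(x)\ll x^{-1/2}$ is quoted from Korenev (the paper cites \cite[Page 127, Equation (29.4)]{korenevbessel}, which is the leading-term asymptotic you state). You are slightly more careful than the paper in spelling out why the two one-sided bounds on $[0,1]$ and $[1,\infty)$ actually assemble into the stated $\min$ bound on all of $[0,\infty)$, noting the tacit constraint $\nu\ge -1/2$ (satisfied here since $\nu=n/2$ or $\nu=n/2-1$ with $n\ge 2$); the paper leaves this gluing step implicit.
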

    \begin{proof}
        As $x\rightarrow 0$ we obtain the first bound directly from \eqref{eq: bessel}. For large values of $x$ the second bound is a direct consequence of \cite[Page 127, Equation (29.4)]{korenevbessel}.
    \end{proof}

\subsection{Indicator function of \texorpdfstring{$B_R\subset\T^n$}{Lg} }

    In this section we formalize the indicator function of the $n$-ball $B_R$ on the Torus $\T^n$.     If we then insert said indicator function for $f$ in \eqref{eq: integral mu G}, this becomes $\mu_G$. Thus we may calculate the ball discrepancy \eqref{eq: ball discrepancy} of $\mu_G$ using the Fourier transform of the indicator function.
    \begin{definition}
        Let $n\geq 2$ and $0<R$. We define the point-pair invariant $k_R\funccol \R^n \times \R^n \rightarrow \R$ by
        \begin{equation}\label{eq: point pair invariant}
            k_R(x,y):=\begin{cases}
            1 & \text{if }\|x-y\|\leq R,\\
            0 & \text{otherwise,}
        \end{cases}
        \end{equation}
        where we denote the Euclidean norm by $\|v\|=\|v\|_2=\sqrt{v\cdot v}$. This induces a point-pair invariant $K_R\funccol \T^n \times \T^n \rightarrow \R$ given by
        \begin{equation}\label{eq: n ball indicator}
            K_R(x,y):= \sum_{v\in \Z^n} k_R(x+v,y).
        \end{equation}
        If $R<1/2$, $K_R$ becomes the indicator function of the injective geodesic $n$-Ball $B_R(y)$ as a function in $x\in\T^n$.
    \end{definition}

    \begin{lemma}\label{lemma: Fourier transform indicator}
        Take $K_R(x,y)$ as in \eqref{eq: n ball indicator}. The Fourier transform of $K_R$ in the variable $x$ is given by
        \begin{equation*}
        \widehat{K}_R(v,y) = \begin{cases}
            \mu(B_R)    & \text{if } v=0,\\
            \frac{R^{n/2}J_{n/2}(2\pi R \|v\|)}{\|v\|^{n/2 }} \e(-v\cdot y) & \text{otherwise},
        \end{cases} 
        \end{equation*}
        for $v\in \Z^n$.
    \end{lemma}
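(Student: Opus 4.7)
The plan is to compute $\widehat{K_R}(v,y)$ directly from the definition, reducing the integral over $\T^n$ to an integral over $\R^n$ via the periodization, and then evaluating the Fourier transform of the indicator function of a Euclidean ball in $\R^n$ using spherical symmetry plus Lemma \ref{lemma: Bessel integral}.

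First, I would write
\begin{equation*}
    \widehat{K_R}(v,y) = \int_{\T^n} K_R(x,y)\,\e(-v\cdot x)\,\diff x = \int_{\T^n}\sum_{w\in\Z^n} k_R(x+w,y)\,\e(-v\cdot x)\,\diff x.
\end{equation*}
Since $\e(-v\cdot x)$ is $\Z^n$-periodic in $x$, the standard unfolding lets me exchange sum and integral and reassemble the pieces into a single integral over $\R^n$:
\begin{equation*}
    \widehat{K_R}(v,y) = \int_{\R^n} k_R(x,y)\,\e(-v\cdot x)\,\diff x = \e(-v\cdot y)\int_{\|u\|\le R}\e(-v\cdot u)\,\diff u,
\end{equation*}
after the translation $u=x-y$. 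When $v=0$ this is exactly $\mu(B_R)$, giving the first case.

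For $v\neq 0$ I pass to spherical coordinates, writing $u=r\omega$ with $r\in[0,R]$ and $\omega\in S^{n-1}$, and use the well-known identity
\begin{equation*}
    \int_{S^{n-1}} \e(-v\cdot r\omega)\,\diff \sigma(\omega) = (2\pi)\, (r\|v\|)^{-(n/2-1)} J_{n/2-1}(2\pi r\|v\|),
\end{equation*}
which follows by rotating coordinates so $v$ is aligned with the first axis and using the standard integral representation of $J_{n/2-1}$. This turns the remaining radial integral into
\begin{equation*}
    \e(-v\cdot y)\cdot 2\pi\,\|v\|^{-(n/2-1)}\int_0^R r^{n/2}J_{n/2-1}(2\pi r\|v\|)\,\diff r.
\end{equation*}

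The main (and essentially only) technical step is evaluating this radial integral. Substituting $s=r/R$ converts it to $R^{n/2+1}\int_0^1 s^{n/2}J_{n/2-1}(2\pi R\|v\| s)\,\diff s$, and then Lemma \ref{lemma: Bessel integral} with $\nu = n/2 - 1$ and $a=2\pi R\|v\|$ produces $(2\pi R\|v\|)^{-1}J_{n/2}(2\pi R\|v\|)$. Collecting the powers of $R$ and $\|v\|$ gives $R^{n/2}\|v\|^{-n/2}J_{n/2}(2\pi R\|v\|)\,\e(-v\cdot y)$, which is the claimed formula. The only potential pitfall is justifying the unfolding and the spherical-average identity, but both are standard once one notes that $K_R$ is an $L^1$ periodization of a compactly supported function.
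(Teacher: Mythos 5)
Your proof is correct and follows essentially the same route as the paper's: unfold the periodization to an integral over $\R^n$, factor out $\e(-v\cdot y)$, reduce to the Fourier transform of a ball via the radial (Bochner) formula, and apply Lemma \ref{lemma: Bessel integral} with $\nu=n/2-1$. The only cosmetic differences are that you derive the radial formula via spherical coordinates rather than citing Stein--Weiss, and you obtain the $v=0$ case by direct evaluation of $\int_{\|u\|\le R}\diff u$, which is a bit cleaner than the paper's limit-and-series argument.
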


    \begin{proof}
        By \Cref{def: Fourier transform} we may calculate the Fourier coefficients of $K_R$ as follows:
        \begin{equation*}
        \begin{split}
            \widehat{K}_R(v,y) :=& \int_{\T^n} K_R(x,y)\e(-v\cdot x) \diff x\\
            =& \int_{\R^n} k_R(x,y)\e(-v\cdot x)\diff x .
        \end{split}
        \end{equation*}
        Here we make a change in variables $x\rightarrow Rx+y$, so that $k$ becomes the indicator function of the unit ball around the origin. Inserting \eqref{eq: point pair invariant} then gives us
        \begin{equation}\label{eq: radial function}
            R^n \e(-v\cdot y) \int_{B_1(0)} \e(-R v\cdot x) \diff x.
        \end{equation}
        Since the indicator function of the unit ball around the origin is a radial function, we may use \cite[Chapter IV, Theorem 3.3]{stein1971introduction}. Thus \eqref{eq: radial function} becomes
        \begin{equation*}
            2\pi R^{(n+2)/2} \e(-v\cdot y) \|v\|^{(2-n)/2 }\int_0^1 J_{(n-2)/2}(2\pi R \|v\| r) r^{n/2} \diff r.
        \end{equation*}
        Hence Lemma \ref{lemma: Bessel integral} gives us
        \begin{equation*}
            \widehat{K}_R(v,y)= \frac{R^{n/2}J_{n/2}(2\pi R \|v\|)}{\|v\|^{n/2 }} \e(-v\cdot y). 
        \end{equation*}
        The term for $v=0$, may be directly calculated as follows
        \begin{equation}\label{eq:zero term}
        \begin{split}
            \widehat{K}_R(0,y) :=& \int_{\T^n} K_R(x,y)\diff x\\
            =& \int_{\R^n} k_R(x,y)\diff x \\
            =&\mu(B_R).
        \end{split}
        \end{equation}
        This concludes the proof of \Cref{lemma: Fourier transform indicator}.
    \end{proof}
    
    The Fourier series \eqref{eq: fourier series} for $K_R$ does not converge absolutely, since the Fourier coefficients are not of sufficiently rapid decay. To work around this, we define mayorants and minorants.

    \begin{definition}
        Let $n\geq 2$, $0<\rho<R$ and $k_R$ as in \eqref{eq: point pair invariant}. We define the convolutions
        \begin{equation}\label{eq: point pair invariant pm}
        \begin{split}
            k_{R,\rho}^\pm (x,y) :=&\frac{1}{\mu(B_\rho)}k_{R\pm\rho}*k_\rho (x,y)\\
            =&\frac{1}{\mu(B_\rho)}\int_{\R^n}k_{R\pm\rho}(x,w)k_\rho (w,y)dw.
        \end{split}
        \end{equation}
        These induce the point-pair invariants $K_R^\pm\funccol \T^n \times \T^n \rightarrow \R$ given by
        \begin{equation}\label{eq: n ball indicator pm}
            K_{R,\rho}^\pm(x,y):= \sum_{v\in \Z^n} k_{R,\rho}^\pm(x+v,y).
        \end{equation}
    \end{definition}
    
    It is readily checked that $k_{R,\rho}^\pm(x,y)$ are both non-negative, continuous, pointwise linear in radial coordinates, bounded by 1 and satisfy
    \begin{equation}\label{eq: pointwise inequality plus}
        k_{R,\rho}^+(x,y)=\begin{cases}
            1 & \text{if } |x-y|\leq R,\\
            0 & \text{if } |x-y|> R+2\rho,
        \end{cases}
    \end{equation}
    and
    \begin{equation}\label{eq: pointwise inequality minus}
        k_{R,\rho}^-(x,y)=\begin{cases}
            1 & \text{if } |x-y|\leq R-2\rho,\\
            0 & \text{if } |x-y|> R.
        \end{cases}
    \end{equation}
    
    \begin{lemma}\label{lemma: indicator inequalities}
        For all $x,y\in \T^n$ we have the pointwise inequalities
        \begin{equation*}
            K^-_{R,\rho}(x,y)\leq K_R(x,y)\leq K^+_{R,\rho}(x,y).
        \end{equation*}
    \end{lemma}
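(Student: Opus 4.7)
The strategy is to first establish the corresponding pointwise inequalities on $\R^n$ between the Euclidean point-pair invariants $k_R$, $k^+_{R,\rho}$, and $k^-_{R,\rho}$, and then transfer them to the torus by noting that the periodic summations in \eqref{eq: n ball indicator} and \eqref{eq: n ball indicator pm} preserve pointwise inequalities between non-negative functions. All the needed properties of $k^\pm_{R,\rho}$, namely non-negativity, boundedness by $1$, and the support/saturation behaviour in \eqref{eq: pointwise inequality plus} and \eqref{eq: pointwise inequality minus}, are already recorded after the definition.

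On $\R^n$ I would split into two cases based on $\|x-y\|$. If $\|x-y\| \le R$, then $k_R(x,y) = 1$ by \eqref{eq: point pair invariant}; the upper bound $k_R(x,y) \le k^+_{R,\rho}(x,y)$ follows because \eqref{eq: pointwise inequality plus} gives $k^+_{R,\rho}(x,y) = 1$, and the lower bound $k^-_{R,\rho}(x,y) \le k_R(x,y)$ follows from the stated fact that $k^-_{R,\rho} \le 1$. If instead $\|x-y\| > R$, then $k_R(x,y) = 0$; the lower bound follows from \eqref{eq: pointwise inequality minus} which forces $k^-_{R,\rho}(x,y) = 0$, while the upper bound is immediate from the non-negativity of $k^+_{R,\rho}$. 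In either case, $k^-_{R,\rho}(x,y) \le k_R(x,y) \le k^+_{R,\rho}(x,y)$.

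To pass to $\T^n$, I would apply the Euclidean inequality at each shift $x+v$ with $v\in \Z^n$ and sum. Since every summand is non-negative, no convergence issue arises (and for $R + 2\rho < 1/2$ only a bounded number of shifts contribute anyway), and the inequalities are preserved termwise, giving $K^-_{R,\rho}(x,y) \le K_R(x,y) \le K^+_{R,\rho}(x,y)$ for every $x,y \in \T^n$.

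The main obstacle is essentially bookkeeping rather than analysis: the argument is a direct case analysis invoking the already-listed properties of $k^\pm_{R,\rho}$, and the periodic summation step is a routine preservation statement for non-negative functions.
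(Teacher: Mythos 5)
Your proposal is correct and follows the same route as the paper, which simply states the lemma ``follows immediately'' from \eqref{eq: pointwise inequality plus} and \eqref{eq: pointwise inequality minus}; you have merely spelled out the case analysis and the termwise passage from $\R^n$ to $\T^n$ that the paper leaves implicit.
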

    \begin{proof}
        This follows immediately from \eqref{eq: pointwise inequality plus} and \eqref{eq: pointwise inequality minus}.
    \end{proof}

    \begin{lemma}\label{lemma: Fourier indicator pm}
        The Fourier transform of $K_{R,\rho}^\pm$ is given by
        \begin{equation*}
        \begin{split}
            \widehat{K}_{R,\rho}^\pm(v,y) &= \begin{cases}
            \mu(B_{R\pm\rho})    & \text{if } v=0,\\
            \frac{(R\pm\rho)^{n/2}\rho^{n/2}}{\mu(B_\rho)} \frac{J_{n/2}(2\pi (R\pm \rho) \|v\|)J_{n/2}(2\pi \rho \|v\|)}{\|v\|^{n}} \e(-v \cdot y) & \text{otherwise},
            \end{cases} 
        \end{split}
        \end{equation*}
        for $v\in \Z^n$.
    \end{lemma}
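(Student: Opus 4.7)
The plan is to compute the Fourier coefficients of $K_{R,\rho}^\pm$ by unfolding the periodization to an integral over $\R^n$, recognizing the convolution structure of $k_{R,\rho}^\pm$, and reducing to the computation already carried out in the proof of \Cref{lemma: Fourier transform indicator}.

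First I would observe that $k_{R\pm\rho}(x,w)$ depends only on $x-w$ and $k_\rho(w,y)$ only on $w-y$, so writing $f = k_{R\pm\rho}(\cdot,0)$ and $g = k_\rho(\cdot,0)$ we have
\begin{equation*}
k_{R,\rho}^\pm(x,y) = \frac{1}{\mu(B_\rho)}(f \ast g)(x-y).
\end{equation*}
Next, by unfolding the sum over $v\in\Z^n$ in \eqref{eq: n ball indicator pm} against the integral over $\T^n$ (the integrand is compactly supported on $\R^n$, so this is justified by standard Poisson-style unfolding or Fubini--Tonelli), I would write
\begin{equation*}
\widehat{K_{R,\rho}^\pm}(v,y) = \int_{\R^n} k_{R,\rho}^\pm(x,y)\,\e(-v\cdot x)\,\diff x = \frac{1}{\mu(B_\rho)}\int_{\R^n}(f\ast g)(x-y)\,\e(-v\cdot x)\,\diff x.
\end{equation*}

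The change of variables $x \mapsto x+y$ pulls out the factor $\e(-v\cdot y)$, and the convolution identity for the Fourier transform on $\R^n$ gives
\begin{equation*}
\widehat{K_{R,\rho}^\pm}(v,y) = \frac{\e(-v\cdot y)}{\mu(B_\rho)}\,\widehat{f}_{\R^n}(v)\,\widehat{g}_{\R^n}(v),
\end{equation*}
where $\widehat{\cdot}_{\R^n}$ denotes the Fourier transform on $\R^n$ with the normalization $\e(-v\cdot x)$. The integrals $\widehat{f}_{\R^n}(v)$ and $\widehat{g}_{\R^n}(v)$ are precisely the integrals computed within the proof of \Cref{lemma: Fourier transform indicator} (before the final periodization step), namely
\begin{equation*}
\widehat{f}_{\R^n}(v) = \frac{(R\pm\rho)^{n/2} J_{n/2}(2\pi(R\pm\rho)\|v\|)}{\|v\|^{n/2}}, \qquad \widehat{g}_{\R^n}(v) = \frac{\rho^{n/2} J_{n/2}(2\pi\rho\|v\|)}{\|v\|^{n/2}},
\end{equation*}
for $v\neq 0$, which on substitution yields the claimed expression.

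For the case $v=0$ I would simply note $\widehat{f}_{\R^n}(0) = \mu(B_{R\pm\rho})$ and $\widehat{g}_{\R^n}(0) = \mu(B_\rho)$, so the $\mu(B_\rho)$ in the denominator cancels to leave $\mu(B_{R\pm\rho})$. No serious obstacle is expected; the only technical point is the unfolding step, but since $k_{R,\rho}^\pm(\cdot,y)$ is continuous and compactly supported on $\R^n$ (with support of diameter at most $2(R+\rho)<1$ once $\rho$ is chosen small enough), the interchange of the lattice sum and the torus integral is routine, and the rest reduces directly to calculations already performed in \Cref{lemma: Fourier transform indicator}.
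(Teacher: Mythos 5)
Your proposal is correct and follows essentially the same route as the paper: unfold the periodized kernel to an integral over $\R^n$, exploit the convolution structure of $k_{R,\rho}^\pm$, and reduce to the single-ball computation in \Cref{lemma: Fourier transform indicator}. The only cosmetic difference is that you package the key step via the $\R^n$ convolution theorem for the Fourier transform, whereas the paper carries out the same interchange explicitly with Fubini--Tonelli and then recognizes the two nested integrals as $\widehat{K_{R\pm\rho}}(v,0)$ and $\widehat{K_\rho}(v,y)$; these are the same argument. (Minor slip: the support of $k_{R,\rho}^+(\cdot,y)$ has radius $R+2\rho$, not $R+\rho$, so the relevant smallness condition for the unfolding is $R+2\rho<1/2$, but this does not affect the validity of the proof.)
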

    \begin{proof} 
        We insert \eqref{eq: point pair invariant pm} into \eqref{eq: n ball indicator pm} and take the Fourier transform \eqref{eq: fourier transform} to obtain
        \begin{equation*}
        \begin{split}
            \widehat{K}_{R,\rho}^\pm(v,y)&=\frac{1}{\mu(B_\rho)} \int_{\R^n} \int_{\R^n}k_{R\pm\rho}(x,w)k_\rho (w,y)\diff w \textbf{ }\e(-v\cdot x) \diff x.
        \end{split}
        \end{equation*}
        As a consequence of \eqref{eq: pointwise inequality plus} we immediately obtain the bound ${\widehat{K}_{R,\rho}^\pm (v,y)\leq \mu(B_{R+2\rho})<\infty}$. Therefore we may use the Fubini-Tonelli theorem \cite[Proposition 5.2.1]{cohn2013measure} to interchange the order of integration.
        \begin{equation*}
        \begin{split}
            &\frac{1}{\mu(B_\rho)} \int_{\R^n}\bigg( \int_{\R^n}k_{R\pm\rho}(x-w,0) \e(-v\cdot x) \diff x\bigg)  \textbf{ }k_\rho (w,y)\diff w\\
            =&\frac{1}{\mu(B_\rho)}\widehat{K}_{R\pm\rho} (v,0) \int_{\R^n}  \e(-v\cdot w)  \textbf{ }k_\rho (w,y)\diff w\\
            =&\frac{1}{\mu(B_\rho)}\widehat{K}_{R\pm\rho} (v,0) \widehat{K_{\rho} }(v,y) .
        \end{split}
        \end{equation*}
        Finally we obtain the result via \Cref{lemma: Fourier transform indicator}.
    \end{proof}
    
    The Fourier series \eqref{eq: fourier series} for $K^\pm_{R,\rho}$ therefore converge absolutely. This can be seen by inserting the bound in Lemma \ref{lemma: Bessel bound} and then applying Lemma \ref{lemma: Zeta value} below.

\subsection{A Zeta function}
    Let $n\geq 2$. In \Cref{sec: proofs} we will need to evaluate Zeta functions of the form
    \begin{equation}\label{eq: zeta nab}
        \zeta_{n,a,b}(s) =\sum_{\substack{v\in\Z^n\\a<\|v\|\leq b}}\frac{1}{\|v\|^s},
    \end{equation}
    for $s\in\R$, where $0\leq a\leq b$. For the evaluation of \eqref{eq: zeta nab} it proves useful to group the terms $\|v\|^{-s}$ together that have the same value. Incidentally these are exactly the integer points on the $(n-1)$-sphere of radius $\|v\|$.
    
    \begin{definition}\label{def: sphere points}
        Let $k,K\in \N$. We denote the sum of squares function, or equivalently, the number of lattice points on an $n-1$-sphere of radius $\sqrt{k}$, by
        \begin{equation*}
            r_n(k):=\#\{v\in\Z^n : \|v\|^2 = k \}.
        \end{equation*}
        We denote the number of lattice points within an $n$-ball of radius $\sqrt{K}$ by
        \begin{equation*}
        \begin{split}
            \cR_n(K) :=&\#\{v\in\Z^n : \|v\|^2 \leq K \} =\sum_{k=0}^K r_n(k).
        \end{split}
        \end{equation*}
    \end{definition}

    \begin{lemma}\label{lemma: Zeta value}
        Take the zeta function $\zeta_{n,a,b}(s)$ defined in \eqref{eq: zeta nab}. We obtain the following bound
        \begin{equation*}
            \zeta_{n,a,b}(s) \ll b^{n-s}+a^{n-s}.
        \end{equation*}
    \end{lemma}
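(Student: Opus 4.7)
The plan is to group the lattice points in the sum by their integer norm squared and then apply Abel summation together with the Gauss-type estimate $\cR_n(K) \ll K^{n/2}$, which follows by associating each lattice point to a unit cube around it and bounding the total volume by that of a slightly enlarged ball of radius $\sqrt{K} + \sqrt{n}/2$.

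First, using the sum-of-squares function $r_n(k)$ from \Cref{def: sphere points}, I would rewrite
\begin{equation*}
    \zeta_{n,a,b}(s) = \sum_{a^2 < k \leq b^2} \frac{r_n(k)}{k^{s/2}}.
\end{equation*}
Setting $g(t) = t^{-s/2}$ and applying Abel summation, this becomes
\begin{equation*}
    \zeta_{n,a,b}(s) = g(b^2)\bigl(\cR_n(b^2) - \cR_n(a^2)\bigr) + \frac{s}{2}\int_{a^2}^{b^2}\frac{\cR_n(t) - \cR_n(a^2)}{t^{s/2+1}}\diff t,
\end{equation*}
where the integral arises from the Riemann--Stieltjes formulation of the discrete Abel formula.

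Next, I would apply $\cR_n(t) - \cR_n(a^2) \leq \cR_n(t) \ll t^{n/2}$ to bound the boundary term by $\ll b^{n-s}$ and the integrand by $\ll t^{(n-s)/2 - 1}$. Evaluating the resulting integral explicitly, for $s \neq n$ it equals $\tfrac{2}{n-s}\bigl(b^{n-s} - a^{n-s}\bigr)$, and the triangle inequality yields $\zeta_{n,a,b}(s) \ll b^{n-s} + a^{n-s}$, as desired.

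The main (and really only) obstacle is the borderline case $s = n$, where the integral produces a $\log(b/a)$ factor that is not captured by the stated bound; either the lemma is implicitly meant for $s \neq n$, or this edge case can be patched with an additional logarithmic term without affecting the intended applications in \Cref{sec: proofs}.
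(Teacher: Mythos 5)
Your proof follows the same strategy as the paper's: group lattice points by squared norm using $r_n(k)$, apply Abel summation with the counting function $\cR_n$, and invoke the volume bound $\cR_n(K)\ll K^{n/2}$. Your variant of the Abel formula — putting the boundary term as $g(b^2)\bigl(\cR_n(b^2)-\cR_n(a^2)\bigr)$ and the integrand as $\cR_n(t)-\cR_n(a^2)$ — is an algebraically equivalent rearrangement of the paper's form $\cR_n(b^2)b^{-s}-\cR_n(a^2)a^{-s}+\tfrac{s}{2}\int_{a^2}^{b^2}\cR_n(t)t^{-s/2-1}\,\diff t$, so the two proofs coincide in substance.

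Your observation about the borderline case $s=n$ is correct and points to something the paper glosses over: there the integral $\int_{a^2}^{b^2}t^{-1}\diff t=2\log(b/a)$ is not $O(1)$ uniformly in $a,b$, so the stated bound can fail. Note, however, that the implied constant in the lemma is allowed to depend on $n$ and $s$ (only $a,b$ vary in the applications), so for any fixed $s\neq n$ the bound holds; and in every application via \Cref{lemma: Zeta Weyl sum} in \Cref{sec: proofs}, $s$ takes only the values $0$, $(n+1)/2$, and $n+1$, none of which equals $n$ since $n\geq 2$. So the edge case never arises and your argument is sound; a cleaner statement of the lemma would simply add the hypothesis $s\neq n$.
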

    
    \begin{proof}
        First we use \Cref{def: sphere points} to group summands of the same value together.
        \begin{equation*}
        \begin{split}
            \zeta_{n,a,b}(s)= &\sum_{\substack{v\in \Z^n\\ a^2 <\|v\|^2 \leq b^2}}\frac{1 }{(\|v\|^2)^{s/2}}= \sum_{\substack{k\in\N\\ a^2 < k \leq b^2}}\frac{r_n(k) }{k^{s/2}}.
        \end{split}
        \end{equation*}
        Here we may use Abel's identity (see e.g. \cite[Theorem 4.2]{apostol2013introduction}) to obtain
        \begin{equation*}
             \zeta_{n,a,b}(s)=\cR_n(b^2) b^{-s} - \cR_n(a^2) a^{-s}  + \frac{s}{2}\int_{a^2}^{b^2} \frac{\cR_n(x)}{ x^{s/2+1}} \diff x.
        \end{equation*}
        The lattice points within an $n$-ball of radius $\sqrt{K}$ can be bounded by its volume $\cR(K)\ll K^{n/2}$ (see e.g.\cite[Page 632]{makarovanalysis}). Therefore we obtain
        \begin{equation}\label{eq:D2 insignificant}
        \begin{split}
            \zeta_{n,a,b}(s)\ll &b^{n-s} + a^{n-s}  + \int_{a^2}^{b^2} x^{(n-s-2)/2} \diff x \\
            \ll& b^{n-s}+a^{n-s} .
        \end{split}
        \end{equation}
    \end{proof}

    \begin{lemma}\label{lemma: Zeta Weyl sum}
        Let $0\leq a\leq b$ and $s\in\R$. Furthermore let $c>0$ and assume $G$ is of type $\eta$ (\Cref{def: eta}). We have
        \begin{equation}\label{eq: Zeta Weyl sum}
            \sum_{\substack{v\in\Z^n\\a<\|v\|\leq b}}\frac{|S_G(v)|^c}{p^{cm}\|v\|^s}\ll (p^{-c\eta}+p^{-n})  (b^{n-s}+a^{n-s}).
        \end{equation}
    \end{lemma}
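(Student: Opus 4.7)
The plan is to split the sum according to the dichotomy of Definition \ref{def: eta}. The terms with $v\in p\Z^n$ admit only the trivial bound $|S_G(v)|^c/p^{cm}\ll 1$, while the remaining terms enjoy the non-trivial bound $|S_G(v)|^c/p^{cm}\ll p^{-c\eta}$. So I would write the sum as
$$\sum_{\substack{v\in\Z^n\\a<\|v\|\leq b}}\frac{|S_G(v)|^c}{p^{cm}\|v\|^s} = \Sigma_{\mathrm{gen}} + \Sigma_{\mathrm{lat}},$$
where $\Sigma_{\mathrm{gen}}$ is the sum over $v\notin p\Z^n$ and $\Sigma_{\mathrm{lat}}$ is the sum over $v\in p\Z^n$.

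For $\Sigma_{\mathrm{gen}}$, factor out $p^{-c\eta}$ to reduce to the pure zeta sum and apply Lemma \ref{lemma: Zeta value}:
$$\Sigma_{\mathrm{gen}} \ll p^{-c\eta}\,\zeta_{n,a,b}(s) \ll p^{-c\eta}(b^{n-s}+a^{n-s}).$$

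For $\Sigma_{\mathrm{lat}}$, observe that the constraint $\|v\|>a\geq 0$ forces $v\neq 0$, so I can write $v=pw$ with $w\in\Z^n\setminus\{0\}$. Then $\|v\|=p\|w\|$ and the constraint becomes $a/p<\|w\|\leq b/p$, giving
$$\Sigma_{\mathrm{lat}} \ll p^{-s}\,\zeta_{n,a/p,b/p}(s) \ll p^{-s}\!\left(\left(\tfrac{b}{p}\right)^{n-s}+\left(\tfrac{a}{p}\right)^{n-s}\right) = p^{-n}(b^{n-s}+a^{n-s})$$
by a second application of Lemma \ref{lemma: Zeta value}. Summing the two bounds yields the claim.

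I do not expect any genuine obstacle here: this is a routine splitting plus a rescaling. The only point worth highlighting is that no Weyl-sum cancellation is needed on the sparse sublattice $p\Z^n$ — the sparseness of $p\Z^n$ alone is enough to provide the $p^{-n}$ savings via the $p^{-s}$ factor from $\|v\|=p\|w\|$ combined with the $p^{s-n}$ produced after rescaling in $\zeta_{n,a/p,b/p}(s)$. This is also why the final bound contains $p^{-c\eta}+p^{-n}$ rather than just $p^{-c\eta}$: the lattice part is entirely insensitive to $\eta$.
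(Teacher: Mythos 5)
Your proof is correct and follows exactly the same route as the paper: split the sum according to whether $v$ lies in $p\Z^n$ or not, bound the generic part by $p^{-c\eta}\zeta_{n,a,b}(s)$, and rescale the sublattice part via $v=pw$ to $p^{-s}\zeta_{n,a/p,b/p}(s)$ before applying Lemma~\ref{lemma: Zeta value} to both pieces. The paper writes this split in a single display and then invokes Lemma~\ref{lemma: Zeta value}; your write-up is the same argument with the rescaling spelled out slightly more explicitly.
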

    \begin{proof}
        By \Cref{def: eta} we may split up \eqref{eq: Zeta Weyl sum} as follows
        \begin{equation*}
        \begin{split}
            \sum_{\substack{v\in\Z^n\\a<\|v\|\leq b}}\frac{|S_G(v)|^c}{p^{cm}\|v\|^s}\ll& p^{-c\eta}\sum_{\substack{v\in\Z^n\\a<\|v\|\leq b}}\frac{1}{\|v\|^s} +p^{-s}\sum_{\substack{v\in \Z^n\\a/p<\|v\|\leq b/p}}\frac{1}{\|v\|^s}.
        \end{split}
        \end{equation*}
        Thus we may use \Cref{lemma: Zeta value} to obtain the bound
        \begin{equation*}
            (p^{-c\eta}+p^{-n})  (b^{n-s}+a^{n-s}).
        \end{equation*}
    \end{proof}

\section{Proofs}\label{sec: proofs}

\subsection{Proof of Theorem \ref{thm: shrinking target}}
    Since the indicator function of $B_R(y)$ is given by \eqref{eq: n ball indicator}, we may write
    \begin{equation}\label{eq: measure in terms of indicator}
        \mu_G(B_R(y))= \int_{\T^n}K_R(x,y) d\mu_G(x).
    \end{equation}
    For any $0<\rho<R$, \Cref{lemma: indicator inequalities} gives us
    \begin{equation*}
        \int_{\T^n}K_{R,\rho}^-(x,y) \diff \mu_G(x) \leq \mu_G(B_R(y)) \leq \int_{\T^n}K_{R,\rho}^+(x,y) \diff \mu_G(x).
    \end{equation*}
    We then derive the absolutely convergent spectral expansion with \Cref{lemma: Fourier transform} and insert the Fourier coefficients as in \Cref{lemma: Fourier indicator pm}.
    \begin{multline}\label{eq:spectral expansion}
        \int_{\T^n}K_{R,\rho}^\pm(x,y) \diff \mu_G(x) = \mu(B_{R\pm\rho}) \\
        + \frac{(R\pm \rho)^{n/2}\Gamma(\frac{n}{2}+1)}{\pi^{n/2}\rho^{n/2} \cdot p^m } \sum_{v\in \Z^n\setminus \{0\}} \frac{J_{n/2}(2\pi (R\pm \rho) \|v\|)\cdot J_{n/2}(2\pi \rho \|v\|)}{\|v\|^n} \e(-v \cdot y) S_G(v).
    \end{multline}
    To simplify calculations, we assume $\rho\leq R/2$. We may partition the sum in \eqref{eq:spectral expansion}, such that each of the two $J$-Bessel functions attain their respective minima given by Lemma \ref{lemma: Bessel bound}. Thus

    \begin{equation}\label{eq: indicator pm integral bound 1}
    \begin{split}
        \int_{\T^n}K_{R,\rho}^\pm(x,y) \diff \mu_G(x) \ll& (R\pm\rho)^n \\
        &+ (R\pm \rho)^{n} \sum_{\substack{v\in \Z^n\setminus \{0\}\\0<\|v\|\leq (R\pm \rho)^{-1}}} \frac{|S_G(v)|}{p^m}\\ 
        &+ (R\pm \rho)^{(n-1)/2} \sum_{\substack{v\in \Z^n\setminus \{0\}\\(R\pm \rho)^{-1}<\|v\|\leq \rho^{-1}}} \frac{|S_G(v)|}{ p^m \|v\|^{(n+1)/2}} \\
        &+ \frac{(R\pm \rho)^{(n-1)/2}}{\rho^{(n+1)/2}} \sum_{\substack{v\in \Z^n\setminus \{0\}\\\rho^{-1}<\|v\|}} \frac{|S_G(v)|}{ p^m \|v\|^{n+1}} .
    \end{split}
    \end{equation}
    We can then evaluate the sums in \eqref{eq: indicator pm integral bound 1} by using \Cref{lemma: Zeta Weyl sum}
    \begin{equation*}
        \int_{\T^n}K_{R,\rho}^\pm(x,y) \diff \mu_G(x) \ll (R\pm\rho)^n +p^{-\eta}+p^{-n}+ (R/ \rho)^{(n-1)/2}   (p^{-\eta}+p^{-n}) .
    \end{equation*}
    In order to calculate the ball discrepancy (see \Cref{def: ball discrepancy}), we calculate an upper bound for the discrepancy of the individual $n$-balls dependent on $\rho$ 
    \begin{equation*}
    \begin{split}
        |\mu_G(B_R(y))-\mu(B_R)|  \ll R^{n-1}\rho +  p^{-\min (\eta,n)} +(R/\rho)^{(n-1)/2}   p^{-\min(\eta,n)}.
    \end{split}
    \end{equation*}
    We may then choose $\rho= R^{-\frac{n-1}{n+1}}p^{-\frac{2\min (\eta,n)}{n+1}}$ for $R>p^{-\frac{\min (\eta,n)}{n}}$ and $\rho=R/2$ for $R\leq p^{-\frac{\min (\eta,n)}{n}}$ and conclude that
    \begin{equation}\label{eq: shrinking target bounds}
    \begin{split}
        |\mu_G(B_R(y))-\mu(B_R)| \ll &\begin{cases}
            R^{\frac{n(n-1)}{n+1}}p^{-\frac{2\min (\eta,n)}{n+1}} &\text{for } R>p^{-\frac{\min (\eta,n)}{n}},\\
            p^{-\min(\eta,n)} &\text{for } R\leq p^{-\frac{\min (\eta,n)}{n}}.
        \end{cases} 
    \end{split}
    \end{equation}
    Thus \eqref{eq: shrinking target bounds} gives us the bound of Theorem \ref{thm: shrinking target}. \hfill\qedsymbol{}

\subsection{Proof of Theorem \ref{thm: variance}}
    The variance of $\mu_G$ with respect to the $n$-ball $B_R$ of radius $R<1/2$ is given by 
    \begin{equation}
        \text{Var}(\mu_G,B_R):=\int_{\T^n}(\mu_G(B_R(y))-\mu(B_R))^2\diff y.
    \end{equation}
    Using Parseval's identity (see e.g. \cite[Page 89, Equation (4.94)]{iwaniec2021analytic}) and since $\mu(B_R)$ is constant, we obtain
    \begin{equation*}
        \text{Var}(\mu_G,B_R)=\sum_{v\in\Z^n\setminus\{0\}} \bigg|\widehat{K}_R(v,y) \frac{S_G(v)}{p^{m}}\bigg|^2 .
    \end{equation*}
    \\
    We may now insert the Fourier coefficients via Lemma \ref{lemma: Fourier transform indicator} and obtain the absolutely \mbox{convergent} spectral expansion
    
    \begin{equation*}
    \begin{split}
        \text{Var}(\mu_G,B_R) =& R^{n} \sum_{v\in\Z^n\setminus\{0\}} \frac{J_{n/2}(2\pi R \|v\|)^2 }{\|v\|^{n}}  \frac{|S_G(v)|^2}{p^{2m}}.
    \end{split}
    \end{equation*}
    We then split up the sum via Lemma \ref{lemma: Bessel bound} to obtain the bound
    \begin{equation*}
    \begin{split}
        \text{Var}(\mu_G,B_R)\ll R^{2n} \sum_{\substack{v\in\Z^n\\ 0<\|v\|\leq R^{-1}}}  \frac{|S_G(v)|^2}{p^{2m}} + R^{n-1} \sum_{\substack{v\in\Z^n\\ R^{-1}<\|v\|}}  \frac{|S_G(v)|^2}{p^{2m}\|v\|^{n+1}}.
    \end{split}
    \end{equation*}
    
    Using Lemma \ref{lemma: Zeta Weyl sum} then gives us
    \begin{equation*}
    \begin{split}
        \text{Var}(\mu_G,B_R)\ll& R^{n} (p^{-2\eta}+p^{-n}) .
    \end{split}
    \end{equation*}

    This concludes the proof of Theorem \ref{thm: variance}. \hfill\qedsymbol{}

\section*{Acknowledgements}
    The author thanks their supervisors Igor E. Shparlinski and Bryce Kerr for their valuable input, as well as the anonymous referees for their helpful suggestions. This research has been supported by a University International Postgraduate Award (UIPA, RSRE7061, RSRE7063), a University of New South Wales School of Mathematics scholarship (RSRT6014) and an Australian Research Council Grant (ARC, RSGT002).

\end{document}